\newtheorem{theorem}{Theorem}[section]
\newtheorem{proposition}[theorem]{Proposition}
\newtheorem{lemma}[theorem]{Lemma}
\theoremstyle{definition}
\newtheorem{definition}[theorem]{Definition}
\newtheorem{example}[theorem]{Example}
\theoremstyle{remark}
\newtheorem{remark}[theorem]{Remark}
\numberwithin{equation}{section}
\def\multiset#1#2{\ensuremath{\left(\kern-.3em\left(\genfrac{}{}{0pt}{}{#1}{#2}\right)\kern-.3em\right)}}
\DeclareMathOperator{\supp}{supp}
\newtheorem*{theorem1*}{Theorem \ref{thm:main}}
\definecolor{darkyellow}{HTML}{B9770E}
\newcommand{\sC}{{\sf C}} %clause
\newcommand{\I}{\mathcal{I}} %Intersection
\title{A graph coloring approach to family-based haplotype reconstruction}
\author[Anema]{Jason A.~Anema}
\address{Jason A.~Anema, Division of Statistical Genomics, Department of Genetics, Washington University School of Medicine, St. Louis, USA}
\email{jasona@wustl.edu}
\author[Escobar]{Laura Escobar}
\address{Laura Escobar, Department of Mathematics and Statistics, Washington University in St. Louis, St. Louis, USA}
\email{laurae@wustl.edu}
\thanks{JAA is funded by US National Institute on Aging – NIH Grants U19-AG063893, U01-AG023746, U01-AG023712, U01-AG023749, U01-AG023755, and U01-AG023744.}
\thanks{LE has been partially funded by NSF Grant DMS-1855598 and NSF CAREER Grant DMS-2142656.  }
\begin{document}

\maketitle

\begin{abstract}
Edge Constrained Vertex Coloring (ECVC) problems are defined on a finite multigraph, their solutions are characterized, and a linear time algorithm is given for solving $N$ ECVCs on the same underlying multigraph. Using ECVC problems we develop a novel family-based haplotype reconstruction method which has linear-time complexity in both number of markers and family size and has many other desirable properties. To do so, we define a multigraph given a genomic interval on which a family is recombination-free.
\end{abstract}

\setcounter{tocdepth}{1}
\tableofcontents

\section{Introduction}

Graph coloring problems have a long tradition in mathematics, applied mathematics, operations research, and computer science, see e.g.~\cite{Tuza} for a survey. In this paper we introduce a new graph coloring problem on finite multigraphs which we call an \emph{edge constrained vertex coloring (ECVC) problem} and reduce these class of problems to 2-SAT. As a consequence the worst-case time and space complexity of solving $N$ ECVC problems on a fixed multigraph $G = (V,E)$ is $\mathcal{O}(N|E|)$.

Our motivation to introduce this problem is due to its applications to some classical problems in genomics, particularly in the current era of whole genome sequencing (WGS).  
We now give more details on these applications.

\subsection{Haplotype reconstruction}\label{sec-haplo-rec}
 A haplotype is a sequence $H = (a_1, \ldots, a_n)$ of alleles on a chromosome at $n$ genetic markers $(M_1, \ldots, M_n)$ listed in the correct chromosomal order. When these are markers in an autosomal chromosome of a diploid organism, each individual has two haplotypes $H_1 = (a_1^1, \ldots, a_n^1)$ and $H_2 = (a_1^2, \ldots, a_n^2)$. Current WGS short-read technology outputs genotypes at each genetic marker, i.e. at the $k$-th marker the subject's genotype is a multiset of alleles, $\{ a_k^1, a_k^2\}$. The problem of haplotype reconstruction, also known as {\it phasing}, is to recover the subject's two haplotypes from the genotype data out of the $2^{n-1}$ possibilities. 
 
 Many family-based methods have been developed to approach the haplotype reconstruction problem, for instance see \cite{BB},\cite[Chapter 36]{EZ}, \cite{MM}, and references therein. Two of the most successful algorithms to date are the Elston--Stewart \cite{ES} and Lander--Green \cite{LG}. 
 Despite their usefulness, there are two issues which limit the successful application of both of these algorithms in WGS family data. Both have exponential time complexity, and both are sensitive to genotype errors triggering excess recombination predictions yielding spurious haplotypes. 
 Concretely, Elston--Stewart has exponential complexity in the number of markers considered and Lander--Green is exponential in family size. 
Current WGS data outputs genotypes on tens of millions of markers genome-wide, and many family-based studies have families within their cohort which are much too large for the Lander--Green algorithm to handle. One such study is the Long Life Family Study (LLFS) \cite{Woj2019}, a family-based study of human families enriched for healthy aging phenotypes and exceptional longevity, in which the listed first author is an investigator.

In Section ~\ref{sec:applications} we define a multigraph, called a \emph{family-haplotype multigraph}, and we show how ECVC problems on these multigraphs provide the framework for a novel family-based method to reconstruct haplotypes which, as a consequence of Section~\ref{section:math}, has linear-time complexity in both number of genetic markers and family size. 
The purpose of this work is not to replace the use of Elston--Stewart, Lander--Green, and others, but rather to build upon their strengths and to use ECVC problems to overcome their limitations, thus enabling researchers to reconstruct high-quality haplotypes across entire chromosomes in large pedigrees from WGS genotype data. 

The haplotype reconstruction methods formulated in this work present no limitation on a pedigree's inbreeding status, nor a genetic marker's variant type (SNP, insertion, deletion, copy number variation, et cetera), number of alleles, or allele frequencies (including no assumption of Hardy-Weinberg equilibrium). It allows for missing genotypes and is able to impute subjects' missing haplotypes and genotypes. It is not only robust to genotype errors, but it even detects and flags previously undetectable genotyping errors. This method also provides a way to localize recombination events with great accuracy, and yields no excess recombination prediction, bypassing a major drawback of existing methods. 
 
Our method for solving the haplotype reconstruction problem, as well as some related problems, in families and producing haplotypes of high-quality is made possible by haplotype segments being shared amongst family members. ECVC problems were designed to maximally exploit this shared information.

%%%%%%%
\section{Edge Constrained Vertex Coloring Problems}\label{section:math}

In this section we define a new class of graph coloring problems and give a linear time algorithm to find their solutions. The results of this section provide the framework through which the applications to genomics of Section ~\ref{sec:applications} are made possible.
Throughout this paper, all graphs are finite multigraphs with loops and parallel edges allowed.

Given a set $C$, we denote by \multiset{C}{2} the set of multisets of size two with elements in $C$. We will study the following types of problems:

\begin{definition}\label{def:ECVC} \textit{Edge Constrained Vertex Coloring (ECVC) Problem.} 
Let $G = (V,E)$ be a multigraph, $C$ a set of colors, and an \textit{edge constraint list}  $L: E \rightarrow  \multiset{C}{2}$.
Generate all \emph{vertex colorings} $\phi: V \rightarrow C$ such that for every edge $e \in E$ with endpoints $v,w$ the multiset $\{\phi(v),\phi(w)\}$ equals $L(e)$.
\end{definition}

To ease notation, for $e\in E$ we let $\phi(e)$ denote the multiset $\{\phi(v)\mid v\in e\}$ where $v\in e$ indicates that $v$ is a vertex of $e$.

\begin{remark}
Note that the solutions to ECVC problems are not assumed to be proper, i.e.~solutions $\phi$ are allowed to satisfy $\phi(v)=\phi(v')$ for adjacent $v,v'$. 
\end{remark}

Recall that a vertex of $G$ is isolated if it has degree zero.
We note that if $G$ has an isolated vertex $v$, then a solution $\phi$ to an ECVC problem can take any value in $C$ at $v$. Due to this, as well as the intended applications of ECVC problems in Section~\ref{sec:applications}, we assume throughout this paper that our multigraphs have no isolated vertices.

As we will see below, finding a solution to an ECVC problem (or determining that no solution exists) reduces to a 2-SAT problem. 
To do so we will need to construct some input data from the ECVC problem. 
In fact, this input data will allow us to give an upper bound for the number of solutions to a given ECVC problem.

Let $G$ and $L$ be as in Definition~\ref{def:ECVC}.
Given a subgraph $H$ of $G$, we denote by $L|_H$ the restriction of $L$ to the edges in $H$.
Subgraphs can be used to determine if an ECVC problem has no solution.
In particular, observe that if the ECVC problem on $H$ with constraints $L|_H$ has no solution, then the ECVC problem on $G$ with constraints $L$ also has no solution.

For $v \in V$, let
\begin{equation}\label{eqn:nei}
	\mathcal{I}_v^{G,L}:= \bigcap_{\substack{e\in E \\ v\in e}} \supp(L( e ) ),
	\end{equation}
where $\supp(L(e))$ is the support set of the multiset $L(e)$ (i.e., the set of distinct elements in $L(e)$).
These sets will play a key role in solving ECVC problems.
For example, every solution $\phi$ must satisfy that for all $v\in V$, $\phi(v)\in \I_v^{G,L}$. 

\begin{remark}
The constraint list $L$ cannot be recovered from $\left\{\mathcal{I}_v^{G,L} \mid v\in V\right\}$.
As we can see in Figure~\ref{fig-not-set-coloring}, one can have different constraint lists $L_1,L_2$ such that for all $v\in V$, $\mathcal{I}_v^{G,L_1}=\mathcal{I}_v^{G,L_2}$. 
However, note that the ECVC problem on the left has no solution, whereas the one on the right has a unique solution.
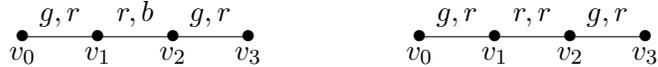
\begin{figure}[h]
\begin{tikzpicture}
\draw (0,0) node {$\bullet$} node[below]{$v_0$}
	--node[above]{$g,r$}	(1,0) node {$\bullet$}node[below]{$v_1$}
	--node[above]{$r,b$}(2,0) node {$\bullet$}node[below]{$v_2$}
	--node[above]{$g,r$}(3,0) node {$\bullet$}node[below]{$v_3$};
\end{tikzpicture}
\qquad\qquad
\begin{tikzpicture}
\draw (0,0) node {$\bullet$} node[below]{$v_0$}
	--node[above]{$g,r$}	(1,0) node {$\bullet$}node[below]{$v_1$}
	--node[above]{$r,r$}(2,0) node {$\bullet$}node[below]{$v_2$}
	--node[above]{$g,r$}(3,0) node {$\bullet$}node[below]{$v_3$};
\end{tikzpicture}
\caption{Two distinct ECVC problems giving rise to the same $\mathcal{I}_v^{G,L}$ at each $v\in V$.}
\label{fig-not-set-coloring}
\end{figure}
\end{remark}

Now, let us discuss the restrictions on ECVC problems imposed by the sets $\I_v^{G,L}$.

\begin{lemma}\label{lem:easy}
Let $L$ be the constraint list of an ECVC problem on a multigraph $G$.
\begin{enumerate}
\item If there exists $v\in V$ such that $\I_v^{G,L}=\varnothing$, then the ECVC problem has no solution.
\end{enumerate}
For $G$ connected we have the following.
\begin{enumerate}\setcounter{enumi}{1}
\item If there exists $v\in V$ such that $|\I_v^{G,L}|=1$, then the ECVC has at most one solution.
\item If $|\I_v^{G,L}|=2$ for all $v\in V$, then $L$ is a constant function with $L(e)$ consisting of two distinct colors, i.e. solutions to the ECVC problem are proper 2-colorings of $G$.
Thus, the ECVC problem has exactly two solutions when $G$ is bipartite and no solution otherwise.
\end{enumerate}
\end{lemma}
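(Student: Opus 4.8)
Part (1) is immediate from the observation recorded just before the statement: every solution $\phi$ satisfies $\phi(v)\in\mathcal{I}_v^{G,L}$ for all $v\in V$, so if some $\mathcal{I}_v^{G,L}=\varnothing$ there is no admissible color for $v$ and hence no solution. I would dispatch this in a single line. For Part (2) I would combine this forcing with connectivity and the path-extension result. The hypothesis $|\mathcal{I}_v^{G,L}|=1$ pins $\phi(v)$ to the unique element of $\mathcal{I}_v^{G,L}$ in every solution. Given any other vertex $w$, connectivity supplies a simple path from $v$ to $w$, and Proposition~\ref{lem-path-correct} says the restriction of a solution to this path is the unique extension of its value at $v$; in particular $\phi(w)$ is the same in every solution. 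Thus any two solutions agree on all of $V$, so at most one exists.

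\textbf{Part (3), the local step.} This case carries the real content. The key local fact is that $\mathcal{I}_v^{G,L}\subseteq\supp(L(e))$ for every edge $e$ incident to $v$, directly from the definition~\eqref{eqn:nei} of $\mathcal{I}_v^{G,L}$ as an intersection of supports (here I use that $G$ has no isolated vertices, so each such intersection is over a nonempty index set). Since $\supp(L(e))$ has at most two elements while $|\mathcal{I}_v^{G,L}|=2$, the inclusion forces $\supp(L(e))=\mathcal{I}_v^{G,L}$, and a two-element support means $L(e)$ is the multiset of two distinct colors $\{a,b\}:=\mathcal{I}_v^{G,L}$. Hence, after fixing a vertex $v_0$, every edge incident to $v_0$ carries the same pair $\{a,b\}$ with $a\ne b$.

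\textbf{Part (3), propagation and counting.} The plan is then to spread this pair across $G$. For a neighbor $w$ of $v_0$, the joining edge has support $\{a,b\}$, so $\mathcal{I}_w^{G,L}\subseteq\{a,b\}$, and with $|\mathcal{I}_w^{G,L}|=2$ this gives $\mathcal{I}_w^{G,L}=\{a,b\}$. Re-running the local step at $w$ and inducting on distance from $v_0$, I would show that every vertex $u$ has $\mathcal{I}_u^{G,L}=\{a,b\}$, whence $L(e)=\{a,b\}$ for all $e\in E$; that is, $L$ is constant with two distinct values. With $L$ so described, a coloring $\phi$ is a solution iff $\{\phi(x),\phi(y)\}=\{a,b\}$ on every edge, which---since $a\ne b$---is exactly the condition that $\phi$ be a proper $2$-coloring of $G$ using the colors $a$ and $b$. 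I would then invoke the standard fact that a connected graph admits a proper $2$-coloring iff it is bipartite, and that such a coloring is unique up to swapping the two colors, giving exactly two solutions in the bipartite case and none otherwise. This count subsumes loops: the argument is uniform in whether $e$ is a loop, and a loop would demand $\{\phi(v),\phi(v)\}=\{a,b\}$, which is impossible---matching that a graph with a loop is neither bipartite nor properly $2$-colorable.

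\textbf{Main obstacle.} The only nontrivial point is the propagation in Part (3), upgrading ``every edge at $v_0$ is constrained by $\{a,b\}$'' to ``every edge of $G$ is constrained by $\{a,b\}$.'' I expect this to follow cleanly by induction along paths out of $v_0$, with the bound $|\supp(L(e))|\le 2$ doing the essential work at each step.
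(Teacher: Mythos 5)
Your proposal is correct and follows essentially the same route as the paper's proof: part (1) from $\phi(v)\in\mathcal{I}_v^{G,L}$, part (2) via the uniqueness in Algorithm~\ref{alg-path} along a path from $v$, and part (3) from the local identity $\mathcal{I}_v^{G,L}=\supp(L(e))$ propagated by connectivity. The only cosmetic difference is that you induct on distance from a fixed vertex while the paper walks a simple path between two arbitrary edges, matching supports at each shared vertex; your explicit handling of loops and of the final two-coloring count is a slight elaboration of what the paper leaves to standard facts.
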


\begin{proof}
(1) follows from the observation that if $\phi$ is a solution, then $\phi(v)\in \I_v^{G,L}$ for all $v\in V$.

To prove (2) suppose there exist two solutions $\phi$ and $\psi$. Since $\psi(v),\phi(v)\in \I_v^{G,L}$, $\phi(v)=\psi(v)$.
Given $w\in V$, consider a simple path from $v$ to $w$  with edge sequence $(e_0, e_1, \ldots, e_{n-1})$ and vertex sequence $(v_0, v_1, \ldots , v_n)$.
For all $i$ note that since $L(e_i)=\phi(e_i)$, then $\phi(v_{i+1})$ is the unique element of $L(e_i) \setminus \{\phi(v_{i})\}$.
Since the same is true for $\psi$, we conclude that $\phi(v_i)=\psi(v_i)$ for all $i$ and (2) follows.

Last, we prove (3). 
Note that $\I_v^{G,L} \subset \supp(L(e))$ for all $v\in V$ and $e\in E$ such that $v\in e$.
Since $|\supp(L(e))|\le 2$, we deduce that $\I_v^{G,L}= \supp(L(e))$.
We now show that for any $e,e'\in E$, $L(e)=L(e')$.
Given a pair of edges $e,e'\in E$ let $p\subset G$ be a simple path with edge sequence $(e_0=e, e_1, \ldots, e_{n-1}=e')$ and vertex sequence $(v_0, v_1, \ldots , v_n)$.
For $i=1,\ldots,n-1$, since $v_i\in e_{i-1},e_i$ then $\supp(L(e_{i-1}))=\I^{G,L}_{v_i}= \supp(L(e_{i}))$.
We conclude that $L$ is constant and $|\supp(L(e))|=2$. 
\end{proof}

We make the following proposition explicit, as this observation is useful in the context of ECVC's applications to genomics, in particular to those described in Sections and ~\ref{sec:rf} and ~\ref{sec:ge}.

\begin{proposition}\label{prop-odd}
If $G$ contains an odd cycle $H$ such that the restricted edge constraint list $L|_H$ is a constant function with $L|_H(e)$ consisting of two distinct colors, then the ECVC problem on $G$ has no solution.
\end{proposition}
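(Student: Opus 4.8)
The plan is to reduce the ECVC problem on $G$ to the restricted problem on the odd cycle $H$, and then to rule out any coloring of $H$ by a parity (non-bipartiteness) argument. First I would invoke the subgraph principle recorded just before the definition of $\I_v^{G,L}$ in \eqref{eqn:nei}: any solution $\phi$ to the ECVC problem on $G$ restricts to a solution of the ECVC problem on $H$ with constraint list $L|_H$. Consequently it suffices to show that the restricted problem on $H$ has no solution, since the absence of a solution on $H$ forces the absence of a solution on $G$.

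Next I would compute the sets $\I_v^{H,L|_H}$ for the vertices $v$ of $H$. Write $L|_H \equiv \{a,b\}$ with $a\neq b$ the two fixed distinct colors. Since $H$ is a cycle, every vertex $v$ of $H$ is incident to an edge of $H$, and each such edge $e$ satisfies $\supp(L(e))=\{a,b\}$; hence $\I_v^{H,L|_H}=\{a,b\}$ and $|\I_v^{H,L|_H}|=2$ for all $v$ in $H$. As $H$ is connected, part (3) of Lemma~\ref{lem:easy} applies to the restricted problem and tells us that any solution on $H$ would be a proper 2-coloring of $H$ with the colors $a$ and $b$. But an odd cycle is not bipartite, so it admits no proper 2-coloring; therefore the restricted problem on $H$ has no solution, and the first step yields that the problem on $G$ has none either. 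If one prefers a self-contained route avoiding Lemma~\ref{lem:easy}, the same conclusion follows directly: writing the cyclic vertex sequence $v_0,v_1,\ldots,v_{n-1}$ of $H$ with $n$ odd, each constant constraint $\{a,b\}$ with $a\neq b$ forces adjacent vertices to receive distinct colors from $\{a,b\}$, so the colors alternate; tracing around the cycle, the odd length forces $\phi(v_{n-1})=\phi(v_0)$, contradicting the constraint on the closing edge $v_{n-1}v_0$, which demands $\{\phi(v_{n-1}),\phi(v_0)\}=\{a,b\}$.

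The only real subtlety—and it is mild—is bookkeeping about the domain of the intersection: the sets $\I_v$ invoked here must be taken with respect to the subgraph $H$ (using only the edges of $H$), not with respect to all of $G$, so that Lemma~\ref{lem:easy}(3) is legitimately applicable to the restricted problem $L|_H$. Once this is pinned down, the non-bipartiteness of the odd cycle makes the conclusion immediate, and no genuine obstacle remains.
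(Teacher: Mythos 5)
Your proof is correct and follows essentially the same route as the paper's: restrict any would-be solution on $G$ to the odd cycle $H$, then apply Lemma~\ref{lem:easy}(3) together with the non-bipartiteness of $H$ to rule out a solution on $H$. Your explicit verification that $\I_v^{H,L|_H}=\{a,b\}$ for every vertex of $H$ (so that the hypothesis of Lemma~\ref{lem:easy}(3) actually holds for the restricted problem) is a detail the paper leaves implicit, and is a welcome addition.
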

\begin{proof}
By (3) in Lemma~\ref{lem:easy}, since $H$ is not bipartite, the ECVC problem on $H$ with constraint list $L|_H$ has no solution.
It follows that ECVC problem on $G$ with constraint list $L$ has no solution.
\end{proof}

\begin{example}
Figure~\ref{fig-nosol} shows that the converse of (1) in Lemma~\ref{lem:easy} is not true.
Figure~\ref{fig:caterpillar} illustrates (2) in Lemma~\ref{lem:easy} and 
Figure~\ref{fig:tee} illustrates (3) in Lemma~\ref{lem:easy}.
\begin{figure}[h]
\begin{tikzpicture}
\draw (1,0) node {$\bullet$} 
	--node[above]{$b,g$}(2,0) node {$\bullet$} 
	--node[above]{$g,r$}(3,0) node {$\bullet$}
	--	node[above]{$b,g$}(4,0) node {$\bullet$}
	;
\end{tikzpicture}
\caption{An ECVC problem on a graph with no solution and $\I^{G,L}_{v}\neq\varnothing$ for all $v\in V$.}
\label{fig-nosol}\end{figure}

\begin{figure}[h]
\begin{tikzpicture}
\draw (0,0) node {$\bullet$}
	--node[above]{$r,b$}	(1,0) node {$\bullet$}
	--node[above]{$b,g$}(2,0) node {$\bullet$}
	--node[above]{$g,r$}(3,0) node {$\bullet$}
	--node[above]{$g,r$}(4,0) node {$\bullet$}
	(1,0) 	--	node[left]{$b,r$}(1,-1) node {$\bullet$}
	(3,0) 	--	node[right]{$b,r$}(3,-1) node {$\bullet$}
	;
\end{tikzpicture}
\caption{The ECVC problem on this tree and edge constraint list has a unique solution.}
\label{fig:caterpillar}
\end{figure}
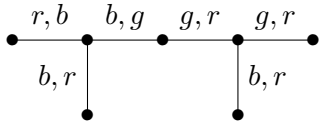

\begin{figure}[h]
\begin{tikzpicture}
\draw (0,0) node {$\bullet$}
	--node[above]{$b,r$}	(1,0) node {$\bullet$}
	--node[above]{$b,r$}(2,0) node {$\bullet$}
	(1,0) node {$\bullet$}
	--	node[left]{$b,r$}(1,-1) node {$\bullet$}
	(1,-1) node {$\bullet$}
	-- node[below]{$b,r$}(2,-1) node {$\bullet$}
	(2,0) node  {$\bullet$}
	-- node[right]{$b,r$}(2,-1) node {$\bullet$}
	(2,0) node  {$\bullet$}
	-- node[above]{$b,r$}(3,0) node {$\bullet$};
\end{tikzpicture}
\caption{The ECVC problem on this graph and edge constraint list has two distinct solutions.}
\label{fig:tee}
\end{figure}
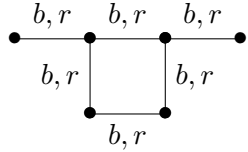
\end{example}

\begin{remark}\label{rem_generate}
A second consequence of Lemma~\ref{lem:easy} is that we can easily generate all solutions to an ECVC problem from an initial solution $\phi$.
Concretely, given a connected component $H$ of $G$ such that $|\I_v^{G,L}|=2$ for all $v$ in $H$ we obtain the second solution by swapping the colors on each vertex.
We now see that if we let $d$ be the number of connected components $H$ of $G$ such that the restricted ECVC problem on $H$ with constraint $L|_H$ has two solutions, then the ECVC problem on $G$ has $2^d$ solutions, all of which can be generated from an initial solution.
\end{remark}

Next, we review 2-SAT problems.

\begin{definition}\label{def:2sat} \textit{2-SAT Problem.} 
Let $x_1,\ldots,x_n$ be Boolean variables and $\sC_1,\ldots,\sC_m$ clauses, each of the form
	\begin{equation*}
	x_i\vee x_j
	,\quad
	x_i\vee \neg x_j
	,\quad\text{or}\quad
	\neg x_i\vee \neg x_j
	\end{equation*}
with $i,j\in\{1, \ldots, n\}$.
Find a truth assignment to the variables that makes the Boolean expression $\sC_1\wedge \cdots \wedge \sC_m$ true, or show that no such truth assignment exists.
\end{definition}

It is well known that the complexity of this problem is $\mathcal{O}(n+m)$.
This follows from observing that finding a solution to a 2-SAT problem or determining that no solution exists is accomplished with 2 successive depth-first searches on the appropriate implication graph, see e.g.~\cite[Lecture 22]{Kozen}.

We now reduce to 2-SAT the problem of finding a solution to an ECVC problem.
Let $G = (V,E)$ and $L: E \rightarrow  \multiset{C}{2}$ as in Definition~\ref{def:ECVC}.
The first step of the reduction is to compute $\I_v^{G,L}$ for all $v\in V$. 
If there exists $v\in V$ such that $\I_v^{G,L}=\varnothing$, Lemma~\ref{lem:easy} immediately tells us that the ECVC problem has no solution and there is no need to invoke a 2-SAT problem.
Therefore, to simplify the presentation of the reduction we will assume that there is no $v\in V$ such that $\I_v^{G,L}=\varnothing$.

Take as Boolean variables $x_c^v$ and $x_c^w$ for $e=(v,w)\in E$ and for each $c\in \supp(L( e))$.
Note that the number of Boolean variables is at most $4|E|$.
Intuitively, the assignment $x_c^v= true$ means ``vertex $v$ is assigned color $c$".
Consider the following formulas:

\begin{itemize}
\item 
For each $e=(v,w)\in E$, let $\alpha_e=\bigwedge_{c\in \supp(L( e))}(x_c^v\vee x_c^w)$. 
Note that $\alpha_e$ is true if and only if each color in $L(e)$ is assigned to a vertex incident to $e$.
\item 
For each $v\in V$, let $\beta_v=\bigvee_{c\in \I_v^{G,L}}x_c^v$.
Note that $\beta_v$ is true if and only if $v$ is assigned at least one color in $\I_v^{G,L}$.
\item
For each $e=(v,w)\in E$ such that $L(e)$ consists of two distinct colors, let $\gamma_e=\bigwedge_{v\in e}(\neg x_c^v\vee \neg x_d^v)$, where $L(e)=\{c,d\}$.
Note that $\gamma_e$ is true if and only if no two distinct colors in $L(e)$ are assigned to a vertex of $e$.
\end{itemize}

Construct a Boolean formula by taking the following conjunction:
	$$
	\Phi=\underbrace{\left(\bigwedge_{e\in E}\alpha_e\right)}_{\sf A}\wedge
	\underbrace{\left(\bigwedge_{v\in V}\beta_v\right)}_{\sf B}\wedge
	\underbrace{\left(\bigwedge_{\substack{e\in E \\ |\supp(L( e))|=2}}\gamma_e\right)}_{\sf C}
	.$$
	
Note that a truth assignment making ${\sf B}\wedge{\sf C}$ true yields the function $\phi: V \rightarrow C$ defined by  
	$$
	\phi(v)=c \ \Longleftrightarrow \ x_c^v=true
	.$$
If this assignment also makes ${\sf A}$ true, then $L(e)=\phi(e)$ for all $e\in E$, i.e.~$\phi$ is a solution to the ECVC problem.
Conversely, if $\phi:V\rightarrow C$ is a solution to the ECVC problem, set $x_c^v=true$ if and only if $\phi(v)=c$. 
The resulting truth assignment makes $\Phi$ true.

\begin{proposition}\label{prop:linear}
The complexity of finding a generating set for the solutions to an ECVC problem, or showing no solution exists, is $\mathcal{O}(|E|)$.
\end{proposition}

\begin{proof}
Computing $\I_v^{G,L}$ for all $v\in V$ can be done with a depth-first search which has complexity $\mathcal{O}(|E|+|V|)$, see e.g.~\cite[Lecture 4]{Kozen}. 
This depth-first search also gives us the connected components of $G$ as well as how many of these components are such that $|\I_v^{G,L}|=2$ for all its vertices.
Since the reduction above has at most $4|E|$ variables and $|V|+ 4|E|$ clauses, the complexity of finding a solution, or showing no solution exists, to the ECVC problem is $\mathcal{O}(|E|+|V|)$.
Remark~\ref{rem_generate} now tells us how to generate all solutions.
Since $G$ was assumed to have no isolated vertices, we have that $|V|\le 2|E|$ and thus $\mathcal{O}(|E|+|V|)=\mathcal{O}(|E|)$.
\end{proof}

\begin{remark}\label{rem:NE}
Fix a multigraph $G = (V,E)$ and a set of colors $C$.
Consider $N$ ECVC problems on $G$, each with edge constraint lists $L_k: E \rightarrow  \multiset{C}{2}$, $k \in \{1, \ldots, N\}$.
These $N$ problems can be solved in linear-time in both $N$ and $|E|$, namely with complexity $\mathcal{O}(N|E|)$.
\end{remark}

\section{Applications of ECVC problems to Genomics}\label{sec:applications}
In this section we use ECVC problems to develop algorithms for various problems in genomics.Most notably we present a solution method to the \emph{haplotype reconstruction problem}, defined in Section~\ref{sec-haplo-rec}, in family data on sexually reproducing diploid organisms (e.g.~humans). Throughout this section, we use the notation introduced in the first paragraph of Section~\ref{sec-haplo-rec}.

Moving forward, we fix a family of individuals with known pedigree (i.e., correct relationships are known), on which a subset of individuals have been sequenced (WGS genotyped). For now we will assume no genotypes are missing and will eliminate this assumption later. Fix a genomic interval of interest on an autosomal chromosome, which contains $n$ genetic markers, $M_1, \ldots, M_n$, on which we will reconstruct the families' haplotypes. Using a small set of high-quality markers, $L_1, \ldots, L_d$, spaced throughout this interval and applying extensions of Elston--Stewart, Lander--Green, or other methods, one can construct the identity-by-descent (IBD) structures, i.e. IBD$_k$ at marker $L_k$ for $k \in \{1, \ldots, d\}$, across the interval of interest with high confidence for the family, assuming a sufficient density of the family has been sequenced. It is common for a family-based study interested in genetics to estimate IBD throughout the genome, for use in linkage analysis, using a relatively small set of high quality markers (called linkage markers).

\begin{figure}[h]
\begin{tikzpicture}
	\node at (-7,0) [circle,draw] (a) {\phantom{$a$}};
	\node at (-6,0) [circle,draw] (1) {};
	\node at (-5,0) [circle,draw] (2) {};
	\node at (-3,0) [circle,draw] (b) {\phantom{$a$}};
	\node at (3,0) [circle,draw] (c) {\phantom{$a$}};
	\node at (5,0) [circle,draw] (3) {};
	\node at (6,0) [circle,draw] (4) {};
        \node at (7,0) [circle,draw] (d) {\phantom{$a$}};
        
        \draw (a) node[below=8] {$L_1$}  -- (1) node[above=8] {$M_1$} -- (2) node[above=8] {$M_2$} -- (-4,0) node[above=8] {$\ldots$} -- (b) node[below=8] {$L_2$} -- (0,0) node[below=8] {$\ldots$} -- (c) node[below=8] {$L_{d-1}$} -- (4,0) node[above=8] {$\ldots$} -- (3) node[above=8] {$M_{n-1}$} -- (4) node[above=8] {$M_n$} -- (d) node[below=8] {$L_d$};
        
\end{tikzpicture}
\caption{High-quality markers $L_1, \ldots, L_d$ spaced throughout markers $M_1, \ldots, M_n$.}
\label{IBDestimate}
\end{figure}
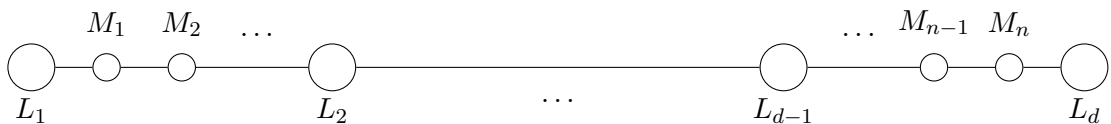

\subsection{ECVC problems for haplotype reconstruction on recombination-free intervals}\label{sec:rf}
Recombination is a natural process that occurs at meiosis between homologous pairs of chromosomes that enables offspring to inherit different combinations of genes than those of their parents, and thus increases genetic variation. For example, on a genomic interval containing markers $M_1, \ldots, M_n$, suppose that a parent has haplotypes $A = (a_1, \ldots , a_n)$ and $B = (b_1, \ldots, b_n)$. If no recombination occurred in this genomic interval during meiosis, then the haplotype an offspring would inherit is either $A$ or $B$.

Within a genomic interval of interest in an autosomal chromosome construct the IBD$_k$ at each marker $L_k$, as above. Identify a subinterval $\I$ on which the family is recombination-free by identifying sequential IBD's which are constant. Provided that markers $L_k$ are relatively close to one another in genetic distance (i.e., in centiMorgan), and in particular due to crossover interference, one can assume that no double-recombination events have occurred between two sequential markers, and thus IBD is constant on the entire interval $\I$ and the family is recombination-free on $\I$. 

Assume $\I$ contains $n$ genetic markers $M_1, \ldots, M_n$. The constant IBD on $\I$ dictates how the haplotypes of the family's founders are inherited throughout the family.
In particular, the IBD determines, for every pair of sequenced individuals, which of their haplotypes (if any) they share. If no such recombination-free interval exist, one can find one by either removing (temporarily) some subjects from the family, or broadening your genomic interval of interest. Figure~\ref{fig_IBD} gives an example pedigree chart depicting the inheritance of nonrecombinant haplotypes consistent with an example IBD for five sequenced family members, labeled 1 through 5.

\begin{figure}[h]
\begin{tikzpicture}
	\node at (-1,0) [circle,draw] (a) {\phantom{$a$}};
        \node at (1,0) [rectangle,draw] (b) {\phantom{$b$}};
        \node at (-2,-2) [circle,draw] (c) {\phantom{$c$}};
        \node at (1,-2) [circle,draw] (1) {$1$};
        \node at (2,-2) [rectangle,draw] (2) {$2$};
        \node at (-3.5,-4) [circle,draw] (3) {$3$};
        \node at (-2.5,-4) [circle,draw] (4) {$4$};
        \node at (-4,-2) [rectangle,draw] (5) {$5$};
        \draw (a) node[left=7] {$A,B$} -- (b) node[right=5] {$C,D$}
        (0,0)--(0,-1)--(-2,-1)--(c)
        (0,-1)--(2,-1)--(2) node[right=7] {$A,C$}
        (1,-1)--(1) node[left=7] {$A,D$}
        (5) node[left=7] {$E,F$}--(c)node[right=6] {$A,D$}
        (-3,-2)--(-3,-3)--(-3.5,-3)--(3) node[left=8] {$A,E$}
        (4) node[right=7] {$E,D$} --(-2.5,-3)--(-3,-3);
\end{tikzpicture}
\caption{Pedigree chart depicting the inheritance of the founders' nonrecombinant haplotypes consistent with an example IBD.}
\label{fig_IBD}
\end{figure}
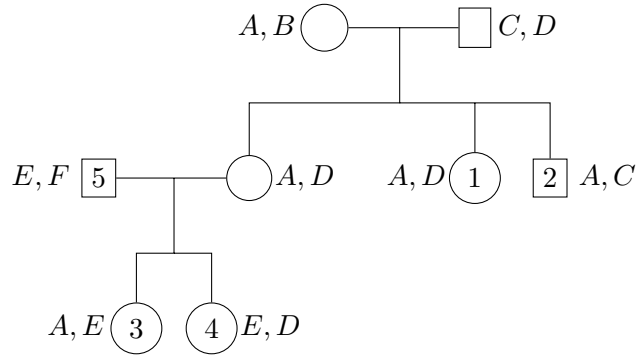

To reconstruct the haplotypes on $\I$ define the \emph{family-haplotype multigraph} to be the multigraph $G$ with vertices all the haplotypes appearing in the sequenced individuals, and an edge for each sequenced individual with vertices the two haplotypes of the individual. In Figure~\ref{fig_fhm} we see the family-haplotype multigraph associated with the IBD structure of Figure~\ref{fig_IBD}. Define the set of colors $C$ by the union of all alleles present in the genotypes of sequenced individuals across the $n$ markers. Define the constraint list, $L_k$, on each edge to be the genotype, at marker $M_k$ of the corresponding individual.

\begin{figure}[h]
\begin{tikzpicture}
	\draw
	(0,0) node {$\bullet$} node[above] {$C$}
	-- node[above] {$2$} (1,0) node {$\bullet$} node[above] {$A$}
	-- node[above] {$1$} (2,0) node {$\bullet$} node[above] {$D$}
	-- node[right] {$4$} (1.5,-1) node {$\bullet$} node[below] {$E$}
	-- node[left] {$3$} (1,0) node {$\bullet$}
	(1.5,-1)
	-- node[below] {$5$} (2.5,-1) node {$\bullet$} node[below] {$F$}
	;	
\end{tikzpicture}
\caption{Family-haplotype multigraph associated with the IBD structure of Figure~\ref{fig_IBD}.}
\label{fig_fhm}
\end{figure}
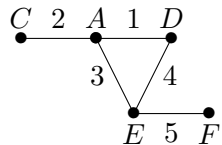

Solve the $n$ ECVC problems on multigraph $G$ with colors $C$ and constraint lists $L_k$, as described in Section~\ref{section:math}.
The solutions, or lack thereof, can be interpreted in the following way. If the $k$-th ECVC problem has a unique solution, then the haplotype reconstruction problem is uniquely determined at marker $k$, with the color assigned to each vertex determining the $k$-th allele of the vertex's corresponding haplotype. If the $k$-th ECVC problem has no solution, then there was a genotyping error (or a mutation) in at least one sequenced individual, and the haplotypes at the $k$-th position cannot be determined. On connected components which have two solutions to the $k$-th ECVC problem, the haplotypes  corresponding to the vertices of this component cannot be determined at the $k$-th marker either. In the two solutions scenario, if the connected component is sufficiently large, this often indicates that the $k$-th genetic marker is unreliable as it displays excess heterozygosity.\\ 
We conclude that on recombination-free intervals with known IBD the haplotype reconstruction problem in families can be solved using ECVC problems in linear-time in both family size, $|E|$, and number of markers, $n$.

\begin{example}
Let us attempt to reconstruct the haplotypes for individuals $1,2,\ldots,5$ of Figure~\ref{fig_IBD} at two genetic markers $M_1$ and $M_2$ using sequencing data in the table below.
\begin{center}
\begin{tabular}{|c|c|c|c|c|c|}
\hline
&$1$ &$2$&$3$&$4$&$5$\\ \hline
$M_1$ & $\{r,r\}$ & $\{r,p\}$ & $\{r,g\}$ &$\{r,g\}$ & $\{r,g\}$
\\ \hline
$M_2$ & $\{r,g\}$ & $\{r,g\}$ & $\{r,g\}$ &$\{r,g\}$ & $\{r,g\}$
\\ \hline
\end{tabular}
\end{center}
\Cref{fig_ex_ecvcs} depicts, from left to right, the ECVC problem for $M_1$, its unique solution, and the ECVC problem for $M_2$.
\begin{figure}[h]
\begin{tikzpicture}
	\draw
	(0,0) node {$\bullet$} %node[above] {$C$}
	-- node[above] {$r,p$} (1,0) node {$\bullet$} %node[above] {$A$}
	-- node[above] {$r,r$} (2,0) node {$\bullet$} %node[above] {$D$}
	-- node[right] {$r,g$} (1.5,-1) node {$\bullet$} %node[below] {$E$}
	-- node[left] {$r,g$} (1,0) node {$\bullet$}
	(1.5,-1)
	-- node[below] {$r,g$} (2.5,-1) node {$\bullet$} %node[below] {$F$}
	;	
\end{tikzpicture}
\hspace{2cm}
\begin{tikzpicture}
	\draw
	(0,0) node {$\bullet$} node[above] {$p$}
	-- %node[above] {$r,p$} 
	(1,0) node {$\bullet$} node[above] {$r$}
	-- %node[above] {$r,r$} 
	(2,0) node {$\bullet$} node[above] {$r$}
	-- %node[right] {$r,g$} (
	(1.5,-1) node {$\bullet$} node[below] {$g$}
	-- %node[left] {$r,g$} 
	(1,0) node {$\bullet$}
	(1.5,-1) -- %node[below] {$r,g$} 
	(2.5,-1) node {$\bullet$} node[below] {$r$}
	;	
\end{tikzpicture}
\hspace{2cm}
\begin{tikzpicture}
	\draw
	(0,0) node {$\bullet$} %node[above] {$C$}
	-- node[above] {$r,g$} (1,0) node {$\bullet$} %node[above] {$A$}
	-- node[above] {$r,g$} (2,0) node {$\bullet$} %node[above] {$D$}
	-- node[right] {$r,g$} (1.5,-1) node {$\bullet$} %node[below] {$E$}
	-- node[left] {$r,g$} (1,0) node {$\bullet$}
	(1.5,-1)
	-- node[below] {$r,g$} (2.5,-1) node {$\bullet$} %node[below] {$F$}
	;	
\end{tikzpicture}
\caption{The leftmost figure is the ECVC problem associated to the sequencing of $M_1$. The figure in the middle is the unique solution to this problem.
The figure on the right is the ECVC corresponding to $M_2$.}
\label{fig_ex_ecvcs}
\end{figure}
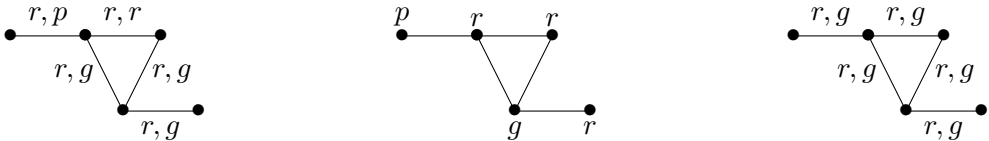

Note that the ECVC problem for $M_2$ has no solution, illustrating \Cref{prop-odd}. This allows us to assert that there is a genotyping error, (or a {\it de novo}  mutation) in at least one of the subjects at $M_2$.

\end{example}

\subsection{ECVC problems and IBD testing, corrections, and discovery.}\label{sec:test} 
On a recombination-free interval, even one subject's graph edge being misspecified (and their descendants inheriting this misspecification) by the IBD estimate can trigger a lot of false ``genotype errors".
In fact, in a sufficiently connected multigraph one misspecified edge can yield ``genotype errors" in as much as $40-50\%$ of heterozygotic markers, greatly exceeding expected genotype error rates of WGS data. 
Sensitivity to misspecified edges in ECVC problems is what empowers the localization of recombination events as discussed in Section ~\ref{sec:rc}. 
Additionally, this sensitivity enables validation of the predicted IBD on recombination-free intervals, correction of misspecified edge(s), the detection of previously undetected recombinations, and allows one to construct the correct IBD (i.e. the appropriate \emph{family-haplotype multigraph}) for a family when other methods are not yielding valid IBD predictions.

\subsection{ECVC problems for recombination localization}\label{sec:rc} 
Suppose we have a genomic interval in an autosomal chromosome containing markers $M_1, \ldots, M_n$ and that a parent has haplotypes $A = (a_1, \ldots , a_n)$ and $B = (b_1, \ldots, b_n)$. In this parent, if exactly one recombination event occurs during meiosis in this genetic interval between markers $M_r$ and $M_{r+1}$, then the haplotype an offspring inherits is either $(a_1, \ldots, a_r, b_{r+1}, \ldots b_n)$ or $(b_1, \ldots, b_r, a_{r+1}, \ldots a_n)$.

Let us assume that on a genomic interval containing markers $M_1, \ldots, M_n$, there is exactly one predicted recombination event, and that at least one of the family members who inherited the resulting recombined haplotype has been WGS genotyped.

Define $IBD_L$ and $IBD_R$ to be the IBD structures at markers $M_1$ and $M_n$, respectively. Assuming this predicted recombination event is real implies that $IBD_L$ and $IBD_R$ are not equal.
Suppose that the recombination occurred between markers $M_r$ and $M_{r+1}$, then $IBD_L$ is the correct IBD on markers $M_1, \ldots, M_r$, and $IBD_R$ is correct on markers $M_{r+1}, \ldots, M_n$. Using $IBD_L$ and $IBD_R$ construct multigraphs $G_L$ and $G_R$, color set $C$ and edge constraint lists $L_k$ for $k = 1, \ldots, n$ as in Section ~\ref{sec:rf}. The edge corresponding to the subject with recombination, as well as the edge of any subject which inherited this recombined haplotype, will have exactly one vertex changed between $G_L$ and $G_R$.

First, solve the $n$ ECVC problems on $G_L$ using the edge constraints $L_k$ for $k \in \{1, \ldots, n\}$ and then solve the ECVC problems on $G_R$ with the same edge constraints. At some index $b$ the $k$-th ECVC problem on $G_L$ will yield no solution at much higher frequency for $k \geq b$, as $G_L$ is incorrectly specified to fit genotypes on markers with index greater that $r$. Similarly, at some index $a$ the $k$-th ECVC problem on $G_R$ will stop yielding no solution at a much higher frequency for $k \leq a$, as $G_R$ is incorrectly specified to fit genotypes on markers with index less than $r+1$. The recombination event is thus most likely to have occurred between markers $M_a$ and $M_b$, thus localizing the recombination from the genomic interval between markers $M_1$ and $M_n$ to a more narrow interval. 

Define $\mathcal{E}$ to be the set with indices $i \in \{a+1, \ldots, b-1\}$ on which the $i$-th ECVC on $G_L$ and the $i$-th ECVC on $G_R$ have solutions (or lack thereof) that differ on the set of vertices shared by $G_L$ and $G_R$. On the markers with indices in $\mathcal{E}$, we cannot solve the haplotype reconstruction problem. 
Fortunately, in real WGS data on large families in LLFS, the size of $\mathcal{E}$ is most often zero. 

When no such indices, $a$ and $b$, are observed, either the predicted recombination is spurious or inconsequential for determining the haplotypes (e.g.~when the predicted recombination only changes edges which are isolated in both  $G_L$ and $G_R$), or differences in $G_L$ and $G_R$ are insufficiently informative to localize the recombination given the genotypes of the family on the $n$ markers. 

When multiple subjects are suspected of having a recombination in an interval of interest, there are a few options. 

\begin{itemize}
\item
When the number of markers is sufficiently large, one could partition the interval, and moving left to right, in increasing marker index order, repeat the above on each subinterval for each potential recombination. 
A similar partitioning strategy can be used when a single subject is suspected of having inherited both a maternal and a paternal recombination. 
\item
When the number of sequenced family members is large, one could focus on resolving one recombination event at a time by removing all other subjects with a predicted recombination or inherited recombination, deleting their corresponding edges from the appropriate multigraphs, and restricting the domain of the edge constraint lists appropriately. 
\item
One could also take a hybrid approach utilizing aspects of both of these strategies. Final localizations of all recombinations should be performed on the appropriate subintervals using all sequenced subjects to obtain the best possible localizations.
\end{itemize}

Recombination localization via ECVC problems together with the ideas of Section ~\ref{sec:rf} enable the reconstruction of haplotypes across entire autosomal chromosomes, and from Proposition ~\ref{prop:linear} it follows that that this can be done with worst-case time  and space complexity $\mathcal{O}(nFR)$, where $n$, $F$, and $R$ are the number of markers on the chromosome, the number of sequenced family members, and the number of recombinations, respectively.

\subsection{ECVC problems and missingness and imputation.}\label{sec:missing} When there are subjects with missing genotypes at the $k$-th marker, delete the corresponding edges from $G$ and solve the ECVC problem on the corresponding submultigraph with the constraint list's domain restricted appropriately. Performing an initial scan of all markers for missingness and recording all missingness patterns to partition the $n$-ECVC problems into problems on a number of submultigraphs still gives worst case performance of $\mathcal{O}(n|E|)$, and can be used in conjunction with all other ideas presented in Section ~\ref{sec:applications}. Imputation of a subject's missing genotypes at marker $k$ is naturally achieved on any subject in which both of the their corresponding edge's vertices are in the vertex set of the $k$-th submultigraph. Imputation of a subject's haplotype(s) at the $k$-th marker can be achieved when at least one of their edge's vertices is in the vertex set of the $k$-th submultigraph. Imputation of haplotypes and genotypes in this way is suitable for both rare and common variants. It is worth noting that for family members on which we have no WGS data (but with known IBD), we can impute their genotypes and haplotype(s) similarly. For example, in the genomic interval $\I$, haplotypes and genotypes of the mother of individuals 3 and 4 in Figure ~\ref{fig_IBD} can be thus imputed. Imputing haplotypes and genotypes increases a study's power by increasing the number of subjects, assuming data on the phenotypes(s) of interest are available for those subjects.

\subsection{ECVC problems and extensions to sex chromosomes.}\label{sec:sexchr}
In humans, and many other sexually reproducing diploid organisms, most individuals have either two copies of the $X$ chromosome, one from each parent, or one copy of chromosome $X$ and one copy of chromosome $Y$. For all individuals, WGS technology returns genotypes, still multisets of size two, for markers on the $X$ chromosome as well as for markers on the $Y$ chromosome. Haplotype reconstruction on the pseudoautosomal regions of the $X$ and $Y$ chromosomes can be performed the same way as on autosomal chromosomes as described above. Haplotype reconstruction on the $Y$ chromosome, outside of the pseudoautosomal regions, is trivial. ECVC problems can be used to reconstruct haplotypes on the nonpseudoautosomal regions of the $X$ chromosome as described above, with one modification: an edge associated to an individual with $XY$ chromosomes in the multigraph should be made a self-loop on the vertex corresponding to their maternally inherited copy of $X$.  Note that for individuals with $XY$ some WGS callers return genotypes on $X$ and $Y$ as multisets of size one. When using such a caller one can merely double the multiplicity of the element in each genotype call on $X$ and on $Y$ and proceed as described above.

\subsection{ECVC problems and their power to detect genotype errors.}\label{sec:ge} 
The ability of ECVC problems to detect genotype errors (or {\it de novo mutations}) on genomic intervals of known IBD is directly related to the connectivity of the underlying multigraph. In particular, it is worth noting that if we fix a spanning forest $T=(V_T, E_T)$ of multigraph $G=(V,E)$, then each edge in $E \setminus E_T$ imposes a stronger constraint to the potential solution(s) to the ECVC problem than edges in $E_T$. For example, as each edge in $E \setminus E_T$ creates a cycle these edges can lead to the ECVC problem not having a solution, such as in Proposition ~\ref{prop-odd}. The number of edges in $E \setminus E_T$ is equal to the first Betti number of the multigraph $G$, showing that the power of an ECVC problem to detect genotype errors is intricately related to the underlying multigraph's first Betti number.

%%%%%%%%
\section{Conclusions}

The methods described in Section ~\ref{sec:applications} are playing a critical role in the Long Life Family Study \cite{Woj2019} to help discover novel complex causal variants driving variation in complex traits related to diabetes, arterial and cardiovascular health, physical function, pulmonary function, cognitive function, and telomere length. One limitation of our methods presented in this work is they require an expert user for successful implementation and can be expensive in human hours. Nonetheless the authors expect that the use of this approach to expand and will be instrumental in resolving variants and haplotypes driving linkage peaks in LLFS, and will greatly enhance the ability for researchers to identify complex genetic structures driving phenotypes in any family-based study with genotype data on sexually reproducing diploid organisms (e.g.~humans).
 
The authors have written a software package that implements all of the ideas in this paper called \emph{HaploGC - Haplotyping via Graph Colorings in Families}. HaploGC has been used in LLFS to construct haplotypes in large multigenerational families (roughly four times larger than the Lander--Green algorithm can handle with current computing power) with runtimes of a few seconds on roughly $100{,}000$ markers. The first listed author announced these new methods in a talk for the {\it 2021 International Genetic Epidemiology Society Meeting} \cite{AEDP}.

\section*{Acknowledgements} We would like to thank Michael A. Province, E.~Warwick Daw, 
Aaron Z.~Palmer, and Peter C.~Samuelson for their helpful conversations and comments.
We also thank anonymous referees for their helpful feedback on the first version of this paper.

\bibliography{refs}
\bibliographystyle{alpha}

\end{document}